\newcommand{\beqa}{\begin{eqnarray*}}
\newcommand{\eeqa}{\end{eqnarray*}\par\noindent}
\newcommand{\id}{\mathsf{id}}
\newcommand{\PP}{\mathbf{P}}
\newcommand{\pow}{\mathcal{P}}
\newcommand{\VV}{\mathcal{V}}
\newcommand{\SSS}{\mathcal{S}}
\newcommand{\rarr}{\rightarrow}
\newcommand{\ie}{\textit{i.e.}~}
\newcommand{\CC}{\mathcal{C}}
\newcommand{\Set}{\mathbf{Set}}
\newcommand{\Two}{\mathbf{2}}
\newcommand{\Term}{\mathbf{1}}
\newcommand{\Sub}{\mathsf{Sub}}
\newcommand{\And}{\; \wedge \;}
\newcommand{\lsem}{\llbracket}
\newcommand{\rsem}{\rrbracket}
\newcommand{\Ass}{\boxplus}
\newcommand{\preord}{\lesssim}
\newcommand{\riso}{\rTo^{\cong}}
\newcommand{\FCoalg}{F{-}\mathbf{Coalg}}
\newcommand{\pkap}{\pow_{\kappa}}
\newcommand{\vn}{\varnothing}
\title{From Lawvere to Brandenburger-Keisler:\\
 interactive forms of diagonalization and self-reference}
\author{Samson Abramsky \inst{1} and Jonathan Zvesper \inst{1}}
\institute{Oxford University Computing Laboratory}
\begin{document}

\maketitle

\section{Introduction}

Diagonal arguments lie at the root of many fundamental phenomena in the foundations of logic and mathematics. Recently, a striking form of diagonal argument has appeared  in the foundations of epistemic game theory, in a paper by Adam Brandenburger and H. Jerome Keisler \cite{BK}. The core Brandenburger-Keisler result can be seen, as they observe, as a two-person or interactive version of Russell's Paradox. This  raises a number of fascinating questions at the interface of epistemic game theory, logic and theoretical computer science:
\begin{enumerate}
\item Is the Brandenburger-Keisler argument (henceforth: `BK argument') just one example of a more general phenomenon, whereby mathematical structures and arguments can be generalized from a familiar `one-person' form to a two- or multi-agent interactive form?
\item To address this question, a sharper understanding of the BK argument is needed.
The argument hinges on a statement involving the modalities \textbf{believes} and \textbf{assumes}. 
The statement has the form
\begin{center}
Ann \textbf{believes} that Bob \textbf{assumes} that \ldots
\end{center}
which is not familiar as it stands. Where does this \textbf{believes-assumes} pattern come from?
How exactly does it relate to the more familiar arguments in the one-person case? In particular, can it be reduced to a one-person argument?
\item Is there a natural multi-agent generalization of the BK argument? In particular, does it have a \emph{compositional structure}, which allows a smooth generalization to any number of agents?
\item The main formal consequence of the BK argument is that there can be no belief model  which is `assumption-complete' with respect to a collection of predicates including those definable in the first-order language of the model. Brandenburger and Keisler also give a positive result, a construction of a topological  model which is assumption-complete with respect to the positive fragment of first-order logic extended with the \textbf{believes} and \textbf{assumes} modalities.
They raise the question of a more general perspective on the availability of such models.
\end{enumerate}

We shall provide substantial answers to questions (2)--(4) above in the present paper.
These results also suggest that the Brandenburger-Keisler `paradox' does offer a good point of entry for considering the more general question (1).

The starting point for our approach is a classic paper by F. William Lawvere from 1969 \cite{Law}, in which he gave a simple form of the (one-person) diagonal argument as a \emph{fixpoint lemma}  in a very general setting. This lemma lies at the basis of a remarkable range of results. Lawvere's ideas were amplified and given a very attractive presentation in a recent paper by Noson Yanofsky \cite{Yan}.

Our contributions can be summarized as follows:
\begin{itemize}
\item We reformulate the core BK argument as a \emph{fixpoint lemma}. This immediately puts it in the general genre of diagonal arguments, and in particular of the Lawvere fixpoint lemma.
\item The BK argument applies to (belief) \emph{relations}, while the Lawvere argument applies to \emph{functions} (actually, abstractly to arrows in a category). To put them on common ground, we give a novel relational reformulation of the Lawvere argument.
\item We analyze the exact logical resources required for our fixpoint version of the BK argument, and show that it can be carried out in \emph{regular logic}, the fragment of first-order logic comprising sequents $\phi \vdash \psi$, where $\phi$ and $\psi$ are built from atomic formulas by conjunction and existential quantification. Regular logic can be interpreted in any \emph{regular category}, which covers a wide range of types of mathematical structure. The Lawvere argument can also be carried out in (a fragment of) regular logic. We can now recognize the Lawvere argument as exactly the one-person version of the BK argument, and interpret the key BK lemma as a reduction to the one-person Lawvere argument.
\item This analysis leads in turn to a smooth generalization of the BK argument to multi-agent belief models. The content of the \textbf{believes-assumes} pattern, or more generally the $\mbox{\textbf{believes}}^*$-\textbf{assumes} pattern:
\begin{center}
$A_1$ \textbf{believes} that $A_2$ \textbf{believes} that  \ldots $A_n$ \textbf{believes} that  $B$ \textbf{assumes} that \ldots
\end{center}
is that the Lawvere hypothesis of \emph{weak point surjectivity} is propagated back along \emph{belief chains}.
\item We furthermore give a \emph{compositional analysis} of the \textbf{believes-assumes} pattern, which \emph{characterizes} what we call `belief-complete' relations in terms of this propagation property. This gives a rather definitive analysis for why the BK argument takes the form it does.
\item We then turn to the issue of the construction of assumption complete models. The categorical perspective allows us to apply general techniques from coalgebra and domain theory to the construction of such models. 
\item Finally, we should mention the use of (elementary) methods from category theory and categorical logic in this context as a methodological contribution. While these may be unfamiliar to some, we believe that they are fully justified in allowing the full scope and generality of the results, and the mathematical contexts in which they may be applied, to be exposed.
\end{itemize}

The further contents of this paper are as follows. In Section~2, we review the setting for the BK argument, and give our formulation of it as a fixpoint lemma. In Section~3, we show how it can be formalized in regular logic. In Section~4, we introduce the Lawvere fixpoint lemma.
In Section~5 we bring BK and Lawvere together, giving a relational reformulation of the Lawvere lemma, and showing how to reduce BK to this version of Lawvere, \ie the two-person to the one-person argument. In Section~6, we give the multi-agent generalization, and in Section~7 the compositional analysis of belief-completeness, and hence of the \textbf{believes-assumes} pattern. In Section~8, we show how general functorial methods lead to the construction of assumption-complete models. Section~9 concludes with some further directions.

\section{The Brandenburger-Keisler Argument}
A (two-person) \emph{belief structure} has the form $(U_a, U_b, R_a, R_b)$ where
\[ R_a \subseteq U_a \times U_b, \qquad R_b \subseteq U_b \times U_a . \]
In the context of epistemic game theory, we think of  $U_a$ and $U_b$ as \emph{type spaces} for Alice and Bob:
\begin{itemize}
\item Elements of $U_a$  represent possible epistemic states of Alice in which she holds beliefs about Bob, Bob's beliefs, etc.
Symmetrically, elements of $U_b$ represent possible epistemic states of Bob.

\item The relations $R_a \subseteq U_a \times U_b$, $R_b \subseteq U_b \times U_a$ specify these beliefs.
Thus $R_a(x, y)$ expresses that in state $x$, Alice believes that state $y$ is possible for Bob.

\item We say that a state $x \in U_a$ \emph{believes} $P \subseteq U_b$ if
$R_a(x) \subseteq P$.
Modal logic provides a useful perspective on these notions, as shown by Eric Pacuit \cite{Pacuit07} (see also \cite{BK}).
Modally, `$x$ believes $P$' is just $x \models \Box_a P$ where $\Box_{a}$ is the usual necessity operator defined with respect to the relation $R_{a}$:
\[  x \models \Box_a \phi \; \equiv \; \forall y. \, R_a(x, y) \Rightarrow \; y \models \phi . \]
\item We say that $x$ \emph{assumes} $P$ if 
$R_a(x) = P$.
This is $x \models \Ass_a P$, where $\Ass_a$ is the modality defined by
\[  x \models \Ass_a \phi \; \equiv \; \forall y. \, R_a(x, y) \Leftrightarrow \; y \models \phi . \]

\end{itemize}

A belief structure $(U_a, U_b, R_a, R_b)$  is \emph{assumption-complete} \cite{BK} with respect to a collection of predicates on $U_a$ and $U_b$ if for every predicate $P$ on $U_b$ in the collection, there is a state $x$ on $U_a$ such that $x$ assumes $P$; and similarly for the predicates on $U_a$.  (A \emph{predicate} on a set $U$ is just a subset of $U$.)\footnote{
Related forms of completeness assumption are used in the analysis 
of various solution concepts in games in \cite{BS,BFK}.}

Brandenburger and Keisler show in \cite{BK} that this hypothesis, in the case where the predicates include those definable in the first-order language of this structure, leads to a contradiction. (They also show the existence of assumption complete models for some other cases.)

Our aim is to understand the general structures underlying this argument. Our first step is to recast their result as a \emph{positive} one --- a fixpoint lemma.

\subsection{The BK Fixpoint Lemma}
We are given a belief structure $(U_a, U_b, R_a, R_b)$.
We assume that for `all'  (in some `definable' class of) predicates $p$ on $U_a$ there is $x_0$ such that:

\begin{eqnarray}
\label{eqn:BAp}
R_a (x_0) \subseteq \{ y \mid R_b (y) & = & \{ x \mid p(x) \} \}. \\
\label{eqn:DT}
\exists y. \, R_a(x_0, y).
\end{eqnarray}

Modally, these assumptions can be expressed as follows: 
\[ x_0 \models \Box_a \boxplus_b p  \And \Diamond_a \top . \]

\textbf{Remark} We can read (\ref{eqn:BAp}) as saying: `$x_0$ \textbf{believes} that ($y$ \textbf{assumes} that $p$)', in the terminology of Brandenburger and Keisler. 

\begin{lemma}[Basic Lemma]
From (\ref{eqn:BAp}) and (\ref{eqn:DT}) we have:
\[ p(x_0) \;\; \Longleftrightarrow \;\; \exists y. [R_a(x_0, y) \; \wedge \; R_b(y, x_0)] . \]
\end{lemma}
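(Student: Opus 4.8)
The plan is to prove the biconditional directionally, simply unfolding the two hypotheses; both implications are essentially one-line consequences of (\ref{eqn:BAp}) and (\ref{eqn:DT}), and the only asymmetry lies in where the witnessing $y$ comes from. The conceptual engine is the set equation $R_b(y) = \{x \mid p(x)\}$ that (\ref{eqn:BAp}) forces on every $y \in R_a(x_0)$: this is precisely what converts the outer, ``believes'' occurrence of $p$ into a statement purely about the relations $R_a$ and $R_b$, producing the self-referential fixpoint shape on the right-hand side.

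For the right-to-left direction, I would take a witness $y$ with $R_a(x_0, y)$ and $R_b(y, x_0)$. Applying (\ref{eqn:BAp}) to this $y$ (which lies in $R_a(x_0)$) gives $R_b(y) = \{x \mid p(x)\}$; since $R_b(y, x_0)$ means $x_0 \in R_b(y)$, we read off $x_0 \in \{x \mid p(x)\}$, i.e.\ $p(x_0)$.

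For the left-to-right direction, the witness is instead supplied by (\ref{eqn:DT}): fix $y$ with $R_a(x_0, y)$. Again (\ref{eqn:BAp}) yields $R_b(y) = \{x \mid p(x)\}$, and now the hypothesis $p(x_0)$ says $x_0 \in \{x \mid p(x)\} = R_b(y)$, i.e.\ $R_b(y, x_0)$. Conjoining $R_a(x_0, y)$ with $R_b(y, x_0)$ and existentially quantifying over $y$ gives the right-hand side.

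I do not expect a genuine obstacle here; the lemma is a pure unfolding of definitions. The one point worth flagging is that the forward direction really does consume the seriality assumption (\ref{eqn:DT}): without some $y \in R_a(x_0)$ there would be no handle on which to pin $p(x_0)$. It is also worth observing, for the later sections, that the statement is exactly a fixpoint identity of diagonal type --- $p(x_0)$ holds iff ``$x_0$ is $R_a$-related to something that is $R_b$-related back to $x_0$'' --- and that only conjunction and existential quantification over the relations are used, which is what will allow the argument to be replayed in regular logic.
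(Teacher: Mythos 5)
Your proof is correct and follows essentially the same route as the paper's: the forward direction takes the witness $y$ from (\ref{eqn:DT}) and uses (\ref{eqn:BAp}) to conclude $R_b(y,x_0)$, while the reverse direction applies (\ref{eqn:BAp}) to the given witness to extract $R_b(y)=\{x\mid p(x)\}$ and hence $p(x_0)$. Your additional remarks about the role of seriality and the regular-logic shape of the formula are accurate and match the paper's subsequent discussion.
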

\begin{proof}
Suppose $p(x_{0})$. Take $y$ as in (\ref{eqn:DT}), so $R_a(x_0, y)$. Then by (\ref{eqn:BAp}), $R_b(y, x_0)$. 
Now consider $y$ satisfying $R_a(x_0, y) \; \wedge \; R_b(y, x_0)$. By (\ref{eqn:BAp}), from $R_a(x_0, y)$ we have that $R_b (y) = \{ x \mid p(x) \}$. Hence from $R_b(y, x_0)$ we have that $p(x_{0})$.
\end{proof}

\begin{lemma}[BK Fixpoint Lemma]
Under our assumptions, every unary propositional operator $O$ has a fixpoint.
\end{lemma}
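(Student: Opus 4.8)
The plan is to run the usual diagonal construction, with the Basic Lemma as the engine. Abbreviate by $D(x)$ the ``diagonal'' formula $\exists y.\,[R_a(x,y) \wedge R_b(y,x)]$, so that the Basic Lemma reads exactly: for the witness $x_0$ attached (by hypothesis) to a predicate $p$ on $U_a$, we have $p(x_0) \Leftrightarrow D(x_0)$. Given a unary propositional operator $O$, the single diagonal step is to define the predicate $p$ on $U_a$ by $p(x) \equiv O(D(x))$, i.e.\ we take the proposition $D(x)$, which speaks about the belief relations at $x$, push it through $O$, and reinterpret the result as a predicate of the free variable $x$.

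Now apply the standing hypothesis to this particular $p$: there is $x_0$ satisfying (\ref{eqn:BAp}) and (\ref{eqn:DT}), so the Basic Lemma gives $p(x_0) \Leftrightarrow D(x_0)$. Unfolding the definition of $p$, this is $O(D(x_0)) \Leftrightarrow D(x_0)$. Hence the proposition $D(x_0)$, concretely $\exists y.\,[R_a(x_0,y) \wedge R_b(y,x_0)]$, is a fixpoint of $O$, as required. The only point to be careful about is the ``definable class'' caveat in the hypothesis: the witness $x_0$ is supplied only for predicates $p$ in whatever fixed class we are working with, so the argument needs that class to be closed under $p \mapsto \lambda x.\,O(D(x))$. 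Since $D$ is built from the relation symbols using only $\wedge$ and $\exists$, this holds automatically as soon as $O$ is expressible in (and the class is closed under the constructs of) the ambient logic — regular logic will suffice, as developed in Section~3 — and in the purely semantic reading, where all subsets are admitted, there is nothing to verify. I expect this bookkeeping to be the only real obstacle; the mathematical content is entirely carried by the Basic Lemma.

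I would also remark, by way of sanity check and to connect back to the motivation, that taking $O$ to be negation recovers the original Brandenburger--Keisler impossibility: a fixpoint of $\neg$ is a contradiction, so no belief structure can be assumption-complete with respect to a class of predicates that is closed under the above construction and contains $D$ — which is the case for the first-order definable predicates.
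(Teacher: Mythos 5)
Your proposal is correct and follows essentially the same route as the paper: define the diagonal predicate (your $D$, the paper's $q$), set $p(x) \equiv O(D(x))$ before invoking the hypothesis to obtain $x_0$ (thus avoiding the circularity the paper explicitly warns about), and conclude via the Basic Lemma that $D(x_0)$ is a fixpoint. Your remarks on closure of the predicate class and on $O = \neg$ likewise match the paper's own remarks.
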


\begin{proof}
Since $p$ was arbitrary, we can define
\begin{eqnarray}
\label{eqn:q}
q(x) & \equiv & \exists y. [R_a(x, y) \; \wedge \; R_b(y, x)] \\
\label{eqn:p}
p(x) & \equiv & O(q(x)).
\end{eqnarray}
(N.B.~It is important that $p$ is defined \emph{without reference to $x_0$} to avoid circularity.)  
These definitions combined with the equivalence given by the Basic Lemma immediately yield:
\[ O(q(x_{0})) \stackrel{(\ref{eqn:p})}{\equiv} p(x_0) \; \Longleftrightarrow \; \exists y. [R_a(x_0, y) \; \wedge \; R_b(y, x_0) \stackrel{(\ref{eqn:q})}{\equiv} q(x_0), \]
so $q(x_0)$ is a fixpoint for the operator $O$, as required.
\end{proof}

\paragraph{Remarks}
Taking $O \equiv \neg$ yields the BK `paradox'.
(In fact $\neg q(x)$ is equivalent to their `diagonal formula' $D$ in \cite{BK}).

In general, since our assumptions (\ref{eqn:BAp}) and (\ref{eqn:DT}) are relative to a class of predicates, this argument relies on $q(x)$ and $p(x)$ being in this class. Note that $q(x)$ only involves conjunction and existential quantification. This leads to our analysis of the logical resources needed to carry out the BK argument.

\section{Formalizing BK in Regular Logic}
We recall that \emph{regular logic} is the fragment of (many-sorted) first-order logic comprising sequents of the form
\[ \phi \vdash_X \psi \]
where $\phi$ and $\psi$ are built from atomic formulas by conjunction (including the empty conjunction $\top$) and existential quantification; and $X$ is a finite set of variables which includes all those occurring free in $\phi$ and $\psi$. The intended meaning of such a sequent is
\[ \forall x_1 \cdots \forall x_n [ \phi \Rightarrow \psi] \]
where $X = \{ x_1 , \ldots , x_n \}$.
This is a common fragment of intuitionistic and classical logic. It plays a core r\^ole in categorical logic.
A convenient summary of regular logic can be found in the lecture notes by Carsten Butz \cite{Butz}.

We shall write $\vdash_X \psi$ for the sequent $\top \vdash_X \psi$, and $\phi \vdash \psi$ for $\phi \vdash_{\varnothing} \psi$.

We shall assume a logical vocabulary containing the sorts $U_a$ and $U_b$, and binary relation symbols $R_a : U_a \times U_b$ and $R_b : U_b \times U_a$, together with a constant $c : U_a$ which will correspond to $x_0$ in the informal argument given in the previous section. Thus $c$ is associated with the given predicate $p$, which will be represented by a formula in one free variable of sort $U_a$.

The assumptions given in the informal argument can be expressed as regular sequents as follows.
\[ \begin{array}{ll}
(A1) & R_a(c, y) \And R_b(y, x) \vdash_{\{x, y\}} p(x) \\
(A2) & R_a(c, y) \And p(x) \vdash_{\{x, y\}} R_b(y, x) \\
(A3) & \vdash \exists y. \, R_a(c, y)
\end{array}
\]
Here (A1) and (A2) correspond to assumption (\ref{eqn:BAp}) in the informal argument, while (A3) corresponds to assumption (\ref{eqn:DT}).

The formal version of Lemma~1 is as follows:
\begin{lemma}
From (A1)--(A3) we can infer the following sequents:
\[ \begin{array}{ll}
(F1) & p(c) \vdash q(c) \\
(F2) & q(c) \vdash p(c)
\end{array}
\]
where 
\[ q(x) \; \equiv \; \exists y. [R_a(x, y) \; \wedge \; R_b(y, x)]. \]
\end{lemma}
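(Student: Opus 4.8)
The plan is to mirror the informal proof of Lemma~1 (the Basic Lemma), but now keeping careful track of the free variables and of the fact that we only have (A1)--(A3) as regular sequents, so every inference must itself be a valid regular-logic inference. The two directions (F1) and (F2) are derived essentially independently, reusing the same two ingredients from the informal argument: the ``existence of a witness'' sequent (A3), and the two halves (A1), (A2) of the \textbf{believes-assumes} hypothesis.

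For (F1), that is $p(c) \vdash q(c)$: I would start from (A3), $\vdash \exists y.\, R_a(c,y)$, which under $p(c)$ gives a witness $y$ with $R_a(c,y)$. With this $y$ and the hypothesis $p(c)$ (instantiating the universally-quantified $x$ in (A2) to $c$), sequent (A2) yields $R_b(y,c)$. Hence $R_a(c,y) \wedge R_b(y,c)$ holds for this $y$, so $\exists y.[R_a(c,y) \wedge R_b(y,c)]$, which is exactly $q(c)$. The only subtlety is to phrase the ``take a witness'' step as the regular-logic rule for eliminating an existential on the left: one works in context $\{y\}$, assumes $R_a(c,y)$, derives (under $p(c)$) the formula $R_a(c,y) \wedge R_b(y,c)$, then re-existentially-quantifies $y$; since the conclusion $q(c)$ has no free $y$, the side condition on $\exists$-elimination is met.

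For (F2), that is $q(c) \vdash p(c)$: unfolding $q(c)$ we must derive $p(c)$ from $\exists y.[R_a(c,y) \wedge R_b(y,c)]$. Again by $\exists$-elimination it suffices to work in context $\{y\}$ and derive $p(c)$ from $R_a(c,y) \wedge R_b(y,c)$. But that is precisely an instance of (A1): (A1) reads $R_a(c,y) \wedge R_b(y,x) \vdash_{\{x,y\}} p(x)$, and substituting $x := c$ gives $R_a(c,y) \wedge R_b(y,c) \vdash_{\{y\}} p(c)$ (substitution of a constant for a variable is admissible in regular logic). Discharging the $\exists y$ then yields $q(c) \vdash p(c)$.

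The main thing to be careful about — rather than a genuine obstacle — is bookkeeping of variable contexts under substitution and $\exists$-elimination: one must check that substituting the constant $c$ for $x$ in (A1) and (A2) is legitimate and does not capture $y$, and that the $\exists y$ appearing in $q$ and in (A3) can be introduced/eliminated with $y$ not free in the relevant conclusions. Once this is set up, both derivations are short and use only the structural rules of regular logic together with conjunction introduction/elimination and the quantifier rules, confirming that Lemma~1 — and hence, by the reasoning after the BK Fixpoint Lemma, the fixpoint construction — lives entirely inside regular logic.
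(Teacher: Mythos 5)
Your proposal is correct and follows essentially the same route as the paper: the paper states this formal lemma without a separate proof, presenting it as the regular-logic transcription of the informal Basic Lemma, whose proof uses exactly your two steps --- (A3) plus (A2) (with $x:=c$) for the forward direction, and (A1) (with $x:=c$) plus $\exists$-elimination for the converse. Your added bookkeeping about substitution, variable contexts, and the Frobenius/existential rules is precisely the content that makes the informal argument a legitimate regular-logic derivation.
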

A \emph{definable unary propositional operator} will be represented by a formula context $O[\cdot]$, which is a closed formula built from atomic formulas, plus a `hole' $[\cdot]$. We obtain a formula $O[\phi]$ by replacing every occurrence of the hole by a formula $\phi$.

The formal version of the Fixpoint Lemma is now stated as follows:
\begin{lemma}
Under the assumptions (A1)--(A3), every definable unary propositional operator $O[\cdot]$ has a fixpoint, \ie a sentence $S$ such that
\[ S \vdash O[S], \qquad O[S] \vdash S . \]
\end{lemma}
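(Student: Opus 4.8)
The plan is to recover the fixpoint by the same diagonal move as in the informal proof, but now entirely at the level of regular-logic sequents, with the sequents (F1) and (F2) carrying all the real content. First I would fix the regular formula
\[ q(x) \;\equiv\; \exists y.\,[R_a(x,y) \wedge R_b(y,x)], \]
in one free variable of sort $U_a$ (note it is built from atomic formulas using only conjunction and existential quantification), and, given the operator context $O[\cdot]$, take the predicate $p$ associated with the constant $c$ to be $p(x) \;\equiv\; O[q(x)]$ --- the result of filling the hole of $O[\cdot]$ with the formula $q(x)$, after renaming the bound variables of $O[\cdot]$ so that they are disjoint from $\{x,y\}$ and no capture occurs. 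The crucial point, which is exactly the N.B.\ in the informal argument, is that this $p(x)$ contains no occurrence of $c$, so there is no circularity: $c$ is merely the witnessing state whose existence (A1)--(A3) postulate for the predicate $p$.

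Next I would apply the formal version of Lemma~1. Since (A1)--(A3) hold for this choice of $p$, we obtain (F1)~$p(c) \vdash q(c)$ and (F2)~$q(c) \vdash p(c)$. Substituting the constant $c$ for $x$ in the syntactic identity $p(x) \equiv O[q(x)]$ --- which is harmless, $c$ being a closed term that cannot be captured --- gives $p(c) \equiv O[q(c)]$, so these two sequents read $O[q(c)] \vdash q(c)$ and $q(c) \vdash O[q(c)]$.

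Finally I would put $S \;\equiv\; q(c)$, which is a sentence, since the only free variable of $q$ was $x$ and it has been replaced by the constant $c$; the two sequents just obtained are then exactly $O[S] \vdash S$ and $S \vdash O[S]$, so $S$ is a fixpoint of $O[\cdot]$, as required. I do not expect a genuine obstacle here: the diagonalization has already been absorbed into (F1) and (F2), and what is left is substitution bookkeeping. The only points needing care are the variable hygiene in forming $O[q(x)]$, and --- used implicitly when appealing to (A1)--(A3) --- the fact that $q(x)$, being a regular formula, makes $p \equiv O[q]$ a legitimate predicate in the class to which the completeness hypotheses apply.
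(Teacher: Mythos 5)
Your proposal is correct and follows exactly the paper's route: the paper likewise obtains the result directly from the preceding lemma by taking $p(x) \equiv O[q(x)]$ and setting $S \equiv q(c)$. The extra remarks you add on variable hygiene and on $p$ not mentioning $c$ are sound elaborations of the paper's own N.B., not a different argument.
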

This is obtained directly from the previous lemma, taking $p(x) \equiv O[q(x)]$.
The required sentence $S$ is then $q(c)$.

\paragraph{Remarks}

\begin{itemize}
\item 
Regular logic can be interpreted in any \emph{regular category} \cite{JvO,Butz}: well-powered with finite limits and images, which are stable under pullbacks.\footnote{There is a brief review of these notions in Section~\ref{catrevsec}.}
These are exactly the categories which support a good calculus of relations.

\item The BK fixpoint lemma is valid in any such category. Regular categories are abundant --- they include all (pre)toposes, all abelian categories,  all equational varieties of algebras, compact Hausdorff spaces, and categories of $Q$-sets for right quantales $Q$.

\item If the propositional operator $O$ is fixpoint-free, the result must be read contrapositively, as showing that the assumptions (A1)--(A3) lead to a contradiction. This will of course be the case if $O = \neg[\cdot]$ in either classical or intuitionistic logic. This yields exactly the BK argument.

\item In other contexts, this need not be the case. For example if the propositions (in categorical terms, the subobjects of the terminal object) form a complete lattice, and $O$ is \emph{monotone}, then by the Tarski-Knaster theorem there will indeed be a fixpoint. This offers a general setting for understanding why \emph{positive logics}, in which all definable propositional operators are monotone,  allow the paradoxes to be circumvented.

\end{itemize}

\section{The Lawvere Fixpoint Lemma}

We start off concretely working in $\Set$. 
Suppose we have a function
\[ g : X \rarr \VV^X \]
or equivalently, by cartesian closure:
\[ \hat{g} : X \times X \rarr \VV \]
Think of $\VV$ as a set of `truth values':  $\VV^X$ is the set of `$\VV$-valued predicates'. Then $g$ is showing how predicates on $X$ can be represented by elements of $X$. In terms of $\hat{g}$: a predicate $p : X \rarr \VV$ is representable by $x \in X$ if for all $y \in X$:
\[ p(y) = \hat{g}(x, y) \]
Note that, if predicates `talk about' $X$, then representable predicates allow $X$ to `talk about itself'.

If $g$ is \emph{surjective}, then \emph{every} predicate on $X$  is representable in $X$.
When can this happen?

\begin{proposition}[Lawvere Fixpoint Lemma]
Suppose that $g : X \rarr \VV^X$ is surjective. Then \emph{every} function $\alpha : \VV \rarr \VV$ has a \emph{fixpoint}: $v \in \VV$ such that $\alpha(v) = v$.
\end{proposition}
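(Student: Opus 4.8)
The plan is to run the classical diagonal construction in its Lawvere form. Given a surjective $g : X \rarr \VV^X$ and an arbitrary $\alpha : \VV \rarr \VV$, I would define a predicate $p : X \rarr \VV$ by $p(y) \eqdef \alpha(\hat{g}(y, y))$, that is, precompose $\alpha$ with the "diagonal" map $y \mapsto \hat{g}(y,y) : X \rarr \VV$. Since $g$ is surjective, there exists $x_0 \in X$ representing $p$, so that $\hat{g}(x_0, y) = p(y)$ for all $y \in X$. Instantiating at $y = x_0$ gives $\hat{g}(x_0, x_0) = p(x_0) = \alpha(\hat{g}(x_0, x_0))$, so $v \eqdef \hat{g}(x_0, x_0)$ is the desired fixpoint of $\alpha$.

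The key steps, in order, are: (1) use cartesian closure to pass freely between $g$ and $\hat{g}$; (2) form the diagonal composite $\delta : X \rarr \VV$, $\delta(y) = \hat{g}(y,y)$, and then $p = \alpha \circ \delta$; (3) invoke surjectivity of $g$ to obtain a representing element $x_0$ for $p$; (4) evaluate the representation identity on the diagonal, i.e.\ at $x_0$ itself, and read off $v = \delta(x_0)$ as a fixpoint. The only subtle point worth flagging — and essentially the "trick" of the argument — is step (2)/(4): $p$ must be defined uniformly in $y$ with no reference to $x_0$ (exactly the anti-circularity caveat already noted after the BK Fixpoint Lemma for $q$ and $p$), so that the element $x_0$ produced by surjectivity can legitimately be substituted back in.

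There is no real obstacle in the $\Set$ setting; the content is entirely in recognizing that the diagonalization is available. If one wanted to anticipate the later relational and categorical reformulations, the analogue of the "hard part" is that plain surjectivity of $g$ is precisely what makes every predicate representable, and it is exactly this hypothesis (later relaxed to weak point surjectivity) that the diagonal argument consumes; replacing $\Set$ by an arbitrary category forces one to express "$p = \alpha \circ \delta$ is represented" arrow-theoretically and to build $\delta$ from the product diagonal $\Delta_X : X \rarr X \times X$, but in $\Set$ all of this is immediate.
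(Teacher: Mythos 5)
Your proposal is correct and is essentially identical to the paper's proof: both define $p = \alpha \circ \hat{g} \circ \Delta$, use surjectivity of $g$ to obtain a representing element $x_0$, and evaluate the representation identity at $x_0$ itself to exhibit the fixpoint $\hat{g}(x_0,x_0) = p(x_0)$. The remarks about defining $p$ without reference to $x_0$ and about the categorical generalization match the paper's own commentary.
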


\begin{proof}
Define a predicate $p$ by
\[ \begin{diagram}
X \times X && \rTo^{\hat{g}} && \VV \\
\uTo^{\Delta} && && \dTo_{\alpha} \\
X && \rTo_{p} && \VV
\end{diagram}
\]
There is $x \in X$ which represents $p$: then
\[ p(x) = \alpha(\hat{g}(\Delta(x))) = \alpha(\hat{g}(x, x)) = \alpha(p(x)) \]
so $p(x)$ is a fixpoint of $\alpha$.
\end{proof}

\paragraph{Remarks on the proof}
Note firstly that the proof is constructive.   The crucial idea is that it uses \emph{two descriptions of $p$} --- one from its definition, one from its representation via $\hat{g}$.
And since $x$ represents $p$, $p(x)$ is (indirect) \emph{self-application}.

But does this make sense?
Say that $X$ has the \emph{fixpoint property}  if every endofunction on $X$ has a fixpoint. 
Of course, \emph{no set with more than one element has the fixpoint property!}

\textbf{Basic example:} $\Two = \{ 0, 1 \}$. The negation
\[ \neg 0 = 1, \qquad \neg 1 = 0 \]
does not have a fixpoint.
So the meaning of the theorem in $\Set$ must be taken \emph{contrapositively}: 
\begin{center}
For all sets $X$, $\VV$ where $\VV$ has more than one element, there is no surjective map
\[ X \rarr \VV^X \]
\end{center}

\paragraph{Two Applications}
\begin{description}
\item[Cantor's Theorem] Take $\VV = \Two$. There is no surjective map
$X \rarr \Two^X$
and hence $| \PP(X) | \not\leq | X |$. 

We can apply the fixpoint lemma to any putative such map, with $\alpha = {\neg}$, to get the usual `diagonalization argument'.

\item[Russell's Paradox] Let $\SSS$ be a `universe' (set) of sets. Let 
$\hat{g} : \SSS \times \SSS \rarr \Two$
define the membership relation:
\[ \hat{g}(x, y) \Leftrightarrow y \in x \]
Then there is a predicate which can be defined on $\SSS$, and which is not representable by any element of $\SSS$. 

Such a predicate is given by the standard Russell set, which arises by applying the fixpoint lemma with $\alpha = \neg$.
\end{description}

\subsection{Abstract Version of the Basic Lemma}

Lawvere's argument was in the setting of cartesian (closed) categories. Amazingly, it only needs finite products.\footnote{In fact, even less suffices: just monoidal structure and a `diagonal' satisfying only point naturality and monoidality.}

Let $\CC$ be a category with finite products. The terminal object (empty product) is written as $\Term$. In $\Set$ it is any one-point set.

\begin{definition}[Lawvere] An arrow $f : A \times A \rarr \VV$ is \emph{weakly point surjective} (wps) if for every $p : A \rarr \VV$ there is an $x : \Term \rarr A$ such that, for all $y : \Term \rarr A$:
\[ p \circ y = f \circ \langle x, y \rangle : \Term \rarr \VV \]
\end{definition}
In this case, we say that \emph{$p$ is represented by $x$}.

\begin{proposition}[Abstract Fixpoint Lemma] Let $\CC$ be a category with finite products. If $f : A \times A \rarr \VV$ is weakly point surjective, then every endomorphism $\alpha : \VV \rarr \VV$ has a fixpoint $v : \Term \rarr \VV$ such that $\alpha \circ v = v$.
\end{proposition}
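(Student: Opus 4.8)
The plan is to replicate the $\Set$-level argument of the Lawvere Fixpoint Lemma, but carried out entirely in the language of arrows, points, and finite products, so that no cartesian closure is needed. The key observation is that everything in the concrete proof was phrased in terms of $\hat g$, the diagonal $\Delta$, and composition; the hypothesis of \emph{weak point surjectivity} is precisely the arrow-theoretic surrogate for ``every predicate is representable'', stated pointwise. So the abstract proof should be a near-mechanical transcription.

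Concretely, I would first form the predicate $p : A \rarr \VV$ that in $\Set$ was defined by $p = \alpha \circ \hat g \circ \Delta$. In the abstract setting, $\Delta : A \rarr A \times A$ is the diagonal coming from the finite-product structure, $f : A \times A \rarr \VV$ plays the role of $\hat g$, and $\alpha : \VV \rarr \VV$ is the given endomorphism; so I set $p \eqdef \alpha \circ f \circ \Delta : A \rarr \VV$. Next, I would invoke weak point surjectivity of $f$ applied to this $p$: this yields a point $x : \Term \rarr A$ such that for every point $y : \Term \rarr A$,
\[ p \circ y = f \circ \langle x, y \rangle. \]
Now I would instantiate $y \eqdef x$, giving $p \circ x = f \circ \langle x, x \rangle$. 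The final step is the computation that exhibits $v \eqdef p \circ x : \Term \rarr \VV$ as a fixpoint of $\alpha$: on the one hand $p \circ x = \alpha \circ f \circ \Delta \circ x$ by definition of $p$, and on the other hand $\Delta \circ x = \langle x, x \rangle$ (a basic property of the diagonal and pairing, since $x$ is a point), so $p \circ x = \alpha \circ f \circ \langle x, x \rangle = \alpha \circ (p \circ x)$ using the instantiated wps equation. Hence $\alpha \circ v = v$.

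The only place that needs a moment's care — and the closest thing to an ``obstacle'' — is the bookkeeping identity $\Delta \circ x = \langle x, x \rangle$ for a point $x : \Term \rarr A$, together with the naturality/associativity manipulations needed to slide $\Delta \circ x$ past $f$ and $\alpha$. This is routine from the universal property of products (both sides have the same two projections, namely $x$ and $x$), but it is the one spot where the abstract proof genuinely departs from ``just rewrite the $\Set$ proof''. Everything else is pure composition of arrows. It is also worth remarking that, exactly as in the $\Set$ case, when $\VV$ has an endomorphism with no fixpoint (e.g. a ``negation'' on a two-element object of truth values), the proposition must be read contrapositively: no such $f : A \times A \rarr \VV$ can be weakly point surjective.
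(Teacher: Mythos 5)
Your proof is correct and is essentially identical to the paper's: define $p = \alpha \circ f \circ \Delta_A$, obtain a representing point $x$ by weak point surjectivity, and compute $p \circ x = \alpha \circ f \circ \Delta_A \circ x = \alpha \circ f \circ \langle x, x\rangle = \alpha \circ p \circ x$ using $\Delta_A \circ x = \langle x, x\rangle$. The paper's proof performs exactly this three-line calculation, with the same justifications at each step.
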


\begin{proof}
Define $p : A \rarr \VV$ by
\[ \begin{diagram}
A \times A && \rTo^{f} && \VV \\
\uTo^{\Delta_{A}} && && \dTo_{\alpha} \\
A && \rTo_{p} && \VV
\end{diagram}
\]

Suppose $p$ is represented by $x : \Term \rarr A$. Then
\[ \begin{array}{lclr}
p \circ x & = & \alpha \circ f \circ \Delta_{A} \circ x & \mbox{def of $p$} \\
& = & \alpha \circ f \circ \langle x, x \rangle & \mbox{diagonal} \\
& = & \alpha \circ p \circ x & \mbox{$x$ represents $p$.}
\end{array}
\]

So $p \circ x$ is a fixpoint of $\alpha$.
\end{proof}

In \cite{Law}, the Fixpoint Lemma is used to derive G\"odel's First Incompleteness Theorem.
Yanofsky's paper covers many more applications: semantic paradozes (Liar, Berry, Richard), the Halting Problem, existence of an oracle $B$ such that $\mathbf{P}^B \neq \mathbf{NP}^B$, Parikh sentences, L\"ob's paradox, the Recursion theorem, Rice's theorem, von Neumann's self-reproducing automata, \ldots

All of these are `one-person' results. The question of applying this argument to a two-person scenario such as the BK paradox has remained open.

\section{Reducing BK to Lawvere}

How do we relate Lawvere to BK? As we have seen, the BK argument is valid in any regular category. This is pretty general. Nevertheless, BK needs a richer setting than Lawvere. To find common ground between them, we reformulate Lawvere, replacing \emph{maps} by \emph{relations}.

As a preliminary, we firstly review how regular logic is interpreted in any regular category.

\subsection{Brief Review of Categorical Logic}
\label{catrevsec}
We shall assume familiarity with some very basic category theory: the notions of category, functor, natural transformation, epis, monos, isomorphisms, products and pullbacks. Any introductory text, such as the excellent (and gentle) \cite{Pierce}, covers these in a few pages.

We shall briefly review how formulas of regular logic are interpreted in any category with suitable structure --- the regular categories. For a very clear and detailed expository account of this material, see the lecture notes \cite{Butz}. Another excellent set of lecture notes \cite{JvO} covers both the basic category theory and regular logic.

We firstly recall the notion of \emph{subobject} of an object $A$ in a category $\CC$. If $m_1 : M_1 \rmon A$ and $m_2 : M_2 \rmon A$ are monomorphisms, we write $m_1 \preord m_2$ if $m_1$ factors through $m_2$:
\begin{diagram}
M_2 & \rmon^{m_2} & A \\
\udash & \rumon_{m_1} & \\
M_1 & &
\end{diagram}
Note that if such an arrow exists, it is unique, and a monomorphism. In $\Set$, if $m_1$ and $m_2$ are inclusion mappings, the relation expresses that the subset $M_1$ is included in the subset $M_2$.
The relation $\preord$ is a preorder (reflexive and transitive) and we can factor through by the corresponding equivalence relation to form a partial order. If the collection of equivalence classes forms a set (not a proper class) for every object $A$, we say that $\CC$ is \emph{well-powered}.\footnote{There is a minor technicality lurking here. The equivalence classes of monomorphisms may themselves be proper classes. So more precisely, we should ask that this collection is  \emph{in bijection with a set}.} In this case we write $\Sub(A)$ for the set of equivalence classes --- the subobjects of $A$. Now suppose that $\CC$ has finite limits. In a pullback diagram
\begin{diagram}[heads=vee,width=5em]
\bullet\SEpbk & \rTo & \bullet \\
\dmon^{m'}  & & \dmon_{m} \\
A& \rTo_{f} & B
\end{diagram}
it is always true that if $m$ is mono, so is $m'$.  Moreover, this action of $f$ on monos by pullback is monotone with respect to the preorder $\preord$. Hence there is a well defined map $f^* : \Sub(B) \rarr \Sub(A)$. (In the case of $\Set$, pullbacks of monos correspond to inverse images of subsets.) This assignment $f \mapsto f^*$ is moreover (contravariantly) functorial, and we get a functor
\[ \Sub : \CC^{\mathsf{op}} \rarr \Set \]
which assigns $\Sub(A)$ to each object $A$ of $\CC$ and $f^*$ to each arrow $f$.

We interpret a many-sorted logical vocabulary in a regular category $\CC$ by assigning an object of $\CC$ to each sort\footnote{We shall not distinguish notationally between a syntactic sort and the corresponding object.}, an arrow $c : \Term \rarr A$ to each constant $c$ of sort $A$, and a subobject in $\Sub(A_1 \times \cdots \times A_n)$ to each relation symbol $R: (A_1, \ldots , A_n)$.

Substitution is captured by pullback. Two examples will suffice. Given a predicate $P \rmon A$ and a constant $c : \Term \rarr A$,
\[ \lsem P(c) \rsem = c^*(P) . \]
Given a relation $R \rmon A \times A$:
\[ \lsem R(x, x) \rsem = \Delta_A^*(R) . \]
Conjunction is interpreted by pullbacks. The greatest lower bound of subobjects $[m_1]$, $[m_2]$ in the partial order $\Sub(A)$ is computed on representatives by the pullback
\begin{diagram}[heads=vee,width=5em]
\bullet\SEpbk & \rTo & \bullet \\
\dmon  & & \dmon_{m_2} \\
A& \rTo_{m_1} & B
\end{diagram}
If $m_1$ and $m_2$ are inclusion mappings in $\Set$, one can check that the pullback is given by the intersection of the corresponding subsets.

Finally, regular categories allow \emph{existential quantification} to be interpreted. Given a formula $\phi(x, y)$ where $\lsem \phi(x, y) \rsem \in \Sub(A \times B)$, the projection $\pi : A \times B \rarr B$ yields
\[ \pi^* : \Sub(B) \rarr \Sub(A \times B) . \]
In a regular category, this map has a left adjoint
\[ \exists_{\pi} :  \Sub(A \times B) \rarr \Sub(B) \]
which allows existential quantification to be interpreted:
\[ \lsem \exists x.\, \phi(x, y) \rsem = \exists_{\pi}(\lsem \phi(x, y) \rsem ) . \]
Thus formulas of regular logic can be interpreted in regular categories. Now suppose we are given a regular sequent $\phi \vdash_X \psi$, where $X = x_1 : A_1 , \ldots , x_n : A_n$. The interpretations of the formulas $\phi$, $\psi$ live in the same poset of subobjects: $\lsem \phi \rsem, \lsem \psi \rsem \in \Sub(A_1 \times \cdots \times A_n)$. (This is why it is important to specify $X$). Then the sequent is true in the interpretation if $\lsem \phi \rsem \leq \lsem \psi \rsem$.

The rules of regular logic (just standard rules for this fragment of first-order logic) are sound in any regular category, and thus \emph{we can use logic to reason about relations in a wide variety of mathematical contexts}. There is also a form of strong completeness theorem. For further details, see \cite{Butz}.

\subsection{Relational Reformulation of Lawvere}
As a first step, we reformulate Lawvere's notion of \emph{weak point surjectivity} in relational terms.

To see how to do this, imagine the Lawvere wps situation 
\[ \hat{g} : X \times X \rarr \Omega \]
is happening in a \emph{topos}, and $\Omega$ is the subobject classifier. In the case of $\Set$, $\Omega$ is just $\Two$, and we are appealing to the familiar identification $\pow(X) = \Two^X$ of subsets with characteristic functions.

Then this map $\hat{g}$ corresponds to a \emph{relation}
\[ R \rmon X \times X \]
Such a relation is \emph{weakly point surjective} (wps) if for every subobject $p \rmon X$ there is $x : \Term \rarr X$ such that, for all $y : \Term \rarr A$:
\[ \lsem R(x, y) \rsem = \lsem p(y) \rsem  \]
or in logical terms
\[ R(x, y) \Longleftrightarrow p(y) . \]

In fact, a weaker notion suffices to prove the Fixpoint Lemma (cf. \cite{SV}).
We say that $R$ is \emph{very weakly point surjective} (vwps) if for every subobject $p \rmon X$ there is $x : \Term \rarr X$ such that:
\[ \lsem R(x, x) \rsem = \lsem p(x) \rsem . \]

\subsection{What is a `propositional operator'?}

To find the right `objective' --- \ie language independent --- notion, once again we consider the topos case, and translate out of that into something which makes sense much more widely.

In a topos, a propositional operator is an endomorphism of the subobject classifier
\[ \alpha : \Omega \rarr \Omega \]
(In more familiar terms: an operator on the lattice of truth values, as  e.g.~in Boolean Algebras with Operators.) This  corresponds to the endomorphism of $\VV$ in Lawvere's original formulation.

Note that by Yoneda,  since
$\Sub \cong \CC(-, \Omega)$,
such  endomorphisms of $\Omega$ correspond bijectively with  \emph{endomorphisms of the subobject functor} --- \ie natural transformations
\[ \tau : \Sub \Longrightarrow \Sub . \]
Thus this is the right semantic notion of `propositional operator' in general. 
Naturality corresponds to \emph{commuting with substitution}.

\subsection{The Relational Lawvere Lemma}

\begin{lemma}[Relational Lawvere fixpoint lemma]
If $R$ is a vwps relation on $X$ in a regular category\footnote{In fact, it suffices to assume that the category is well-powered and has finite limits.}, then every endomorphism of the subobject functor
\[ \tau : \Sub \Longrightarrow \Sub \]
has a  fixpoint.
\end{lemma}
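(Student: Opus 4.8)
The plan is to mimic the proof of the Abstract Fixpoint Lemma (Proposition~4), but working entirely with subobjects and the endofunctor $\Sub$ rather than with arrows into a fixed object $\VV$. The data we have are: a very weakly point surjective relation $R \rmon X \times X$, and a natural transformation $\tau : \Sub \Longrightarrow \Sub$. The goal is a fixpoint of $\tau$, which should mean a subobject $S$ of the terminal object $\Term$ with $\tau_{\Term}(S) = S$ (equivalently, a `truth value' fixed by $\tau$). First I would write down the `diagonalized' predicate: let $q \in \Sub(X)$ be $\lsem R(x,x)\rsem = \Delta_X^*(R)$, and then define the candidate predicate $p \in \Sub(X)$ by applying the component $\tau_X$, i.e. $p = \tau_X(q)$. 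This is the relational analogue of the square in the proof of Proposition~4, with $\alpha$ replaced by $\tau_X$ and $\hat g \circ \Delta$ replaced by $\Delta_X^*(R)$.

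Next I would invoke vwps: there is a point $x : \Term \rarr X$ representing $p$ in the weak diagonal sense, i.e. $x^*(R') = x^*(p)$ where I am abusing notation — precisely, $\lsem R(x,x)\rsem = \lsem p(x)\rsem$ as subobjects of $\Term$, which is to say $x^*(\Delta_X^*(R)) = x^*(p)$. Now I would run the following chain of equalities in $\Sub(\Term)$:
\[
x^*(p) \;=\; x^*(\tau_X(q)) \;=\; \tau_{\Term}(x^*(q)) \;=\; \tau_{\Term}(x^*(\Delta_X^*(R))) \;=\; \tau_{\Term}(x^*(p)).
\]
The first equality is the definition of $p$; the second is \emph{naturality of $\tau$} applied to the arrow $x : \Term \rarr X$ (this is exactly the `commuting with substitution' remark in the excerpt — substituting the constant $x$ for the free variable commutes with the operator $\tau$); the third is the definition of $q$; and the fourth is the vwps equation $x^*(\Delta_X^*(R)) = x^*(p)$. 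Hence the subobject $S := x^*(p) \in \Sub(\Term)$ satisfies $S = \tau_{\Term}(S)$, which is the desired fixpoint.

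The main obstacle, and the step that needs the most care, is the naturality step: one must check that $\tau_{\Term} \circ x^* = x^* \circ \tau_X$ really is an instance of the naturality square for $\tau : \Sub \Longrightarrow \Sub$ at the morphism $x : \Term \to X$ in $\CC^{\mathsf{op}}$ — i.e. that $\Sub(x) = x^* : \Sub(X) \to \Sub(\Term)$, which is just the contravariant functoriality of $\Sub$ recalled in Section~\ref{catrevsec}. One should also be explicit that a `fixpoint of $\tau$' is being interpreted as a fixed subobject of $\Term$; the Basic example of Section~4 (negation on $\Two$ has no fixed truth value) shows that the lemma, like Lawvere's, will in practice be used contrapositively, so the statement is only interesting once one fixes this reading. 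Everything else — that $\Delta_X^*(R)$ and $x^*(-)$ are well-defined maps on subobjects — is supplied verbatim by the categorical-logic review, and in fact only well-poweredness and finite limits are used, matching the footnote.
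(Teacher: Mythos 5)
Your proposal is correct and follows essentially the same route as the paper's own proof: define $p = \tau_X(\Delta_X^*(R))$, obtain a representing point $c : \Term \rarr X$ from vwps, and run the chain of equalities in $\Sub(\Term)$ using naturality of $\tau$ at $c$ (i.e.\ $c^* \circ \tau_X = \tau_{\Term} \circ c^*$) together with $(\Delta_X \circ c)^* = \langle c, c\rangle^*$ and the vwps equation. Your explicit identification of the fixpoint as an element of $\Sub(\Term)$ and your flagging of the naturality step as the crux both match the paper exactly.
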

Note that a fixpoint $\mathbf{K} \Term \Longrightarrow \Sub$ from the constant functor valued at the terminal object is determined by its value at $\Sub(\Term)$.

\begin{proof}
We define a predicate $P(x) \equiv \tau(R(x, x))$, so $\lsem P \rsem = \tau_X(\Delta_X^*(R))$.
By vwps, there is $c : \Term \rarr X$ such that:
\[  \lsem P(c) \rsem = c^*(\lsem P \rsem) = \langle c, c \rangle^*(R) = \lsem R(c, c) \rsem . \]

Then
\[ \begin{array}{ll}
\lsem P(c) \rsem & = c^*(\lsem P \rsem) = c^*(\tau_X(\Delta_X^*(R)) = \tau_{\Term}(c^* \circ  \Delta_X^*(R)) \\
&  = \tau_{\Term}((\Delta_X \circ c)^*(R)) = \tau_{\Term}(\langle c, c \rangle^*(R))  \\
& = \tau_{\Term}(c^*(\lsem P \rsem)) = \tau_{\Term}(\lsem P(c) \rsem) . 
\end{array}
\]
\end{proof}

\subsection{From BK to Lawvere}

Now given relations
\[ R_a \rmon A \times B, \qquad R_b \rmon B \times A \]
we can form their relational composition $R \rmon A \times A$:
\[ \lsem R(x_1, x_2) \rsem \equiv \lsem \exists y. \, [ R_a(x_1, y) \And R_b(y, x_2) ] \rsem \]

Our Basic Lemma can now be restated as follows:
\begin{lemma}
If $R_a$ and $R_b$ satisfy the BK assumptions (A1)--(A3), then $R$ is vwps.
\end{lemma}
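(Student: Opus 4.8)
The plan is to unwind the definition of very weak point surjectivity for the composite relation $R$ and show it follows from assumptions (A1)--(A3) together with the formal Basic Lemma (the sequents (F1), (F2)). Recall that $R$ is vwps if for every subobject $p \rmon A$ there is a point $c : \Term \rarr A$ with $\lsem R(c,c) \rsem = \lsem p(c) \rsem$. Since $\lsem R(x,x) \rsem = \lsem \exists y.\,[R_a(x,y) \And R_b(y,x)] \rsem = \lsem q(x) \rsem$ with $q$ exactly the formula appearing in Lemma~3 (the formal Basic Lemma), the statement $\lsem R(c,c) \rsem = \lsem p(c) \rsem$ is precisely $\lsem q(c) \rsem = \lsem p(c) \rsem$, \ie the conjunction of the two sequents $p(c) \vdash q(c)$ and $q(c) \vdash p(c)$.

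First I would fix an arbitrary subobject $p \rmon A$, and use it to interpret the relation symbol $p$ and instantiate the logical vocabulary of Section~3: the sorts $U_a, U_b$ as $A, B$, the relation symbols $R_a, R_b$ as the given subobjects, and the predicate symbol $p$ as the chosen $p \rmon A$. The assumptions (A1)--(A3) then become constraints on the choice of the constant $c : \Term \rarr A$. Here is where the content of "$R_a$ and $R_b$ satisfy the BK assumptions" must be read carefully: it should mean that for every such $p$ there exists a point $c : \Term \rarr A$ making the interpretations of (A1), (A2), (A3) true in $\CC$ — this is the categorical rendering of "for all predicates $p$ there is $x_0$ such that (\ref{eqn:BAp}) and (\ref{eqn:DT})". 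So I would choose $c$ to be precisely such a witnessing point for the given $p$.

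Next, with $c$ so chosen, (A1)--(A3) hold in the interpretation, and by the soundness of regular logic (reviewed in Section~\ref{catrevsec}) together with Lemma~3, the sequents (F1): $p(c) \vdash q(c)$ and (F2): $q(c) \vdash p(c)$ are also true in $\CC$. Truth of these sequents in the interpretation means exactly $\lsem p(c) \rsem \leq \lsem q(c) \rsem$ and $\lsem q(c) \rsem \leq \lsem p(c) \rsem$ in $\Sub(\Term)$, hence $\lsem p(c) \rsem = \lsem q(c) \rsem = \lsem R(c,c) \rsem$. Since $p$ was arbitrary, $R$ is vwps, as required.

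The main obstacle — really the only subtle point — is making precise what "$R_a$ and $R_b$ satisfy the BK assumptions (A1)--(A3)" means when $p$ ranges over a genuinely arbitrary subobject rather than a definable one, and ensuring that the witness $c$ is allowed to depend on $p$. In the original informal argument $x_0$ depends on $p$, and the no-circularity remark after (\ref{eqn:p}) is exactly what licenses this; I would flag that the same proviso applies here, so that the definition of $p$ used to instantiate the vwps condition does not secretly refer to $c$. Once that is pinned down, the proof is a routine transport of Lemma~3 across the soundness of regular logic, and no further categorical work (images, stability) is needed beyond what Section~\ref{catrevsec} already supplies.
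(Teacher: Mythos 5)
Your proof is correct and follows the same route the paper intends: the lemma is presented there as a direct restatement of the (formal) Basic Lemma, \ie transporting the sequents (F1) and (F2) across the soundness of regular logic in a regular category, exactly as you do, using $\lsem R(c,c)\rsem = \lsem q(c)\rsem$. Your added care about the quantifier order (the witness $c:\Term\rarr A$ depending on the subobject $p$) matches the paper's reading of the BK assumptions, as confirmed by the later ``Generalized BK Assumptions'' which quantify in exactly that order.
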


Hence the relational Lawvere fixpoint lemma applies!
As an immediate Corollary, we obtain:

\begin{lemma}[BK Fixpoint Lemma]
If $R_a$ and $R_b$ satisfy the BK assumptions (A1)--(A3), then every endomorphism of the subobject functor has a fixpoint.
\end{lemma}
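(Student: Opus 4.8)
The plan is to read this final statement as an immediate corollary of the two lemmas that precede it in this subsection, so the proof is essentially a one-line composition. First I would invoke the preceding lemma (``If $R_a$ and $R_b$ satisfy the BK assumptions (A1)--(A3), then $R$ is vwps''), where $R \rmon A \times A$ is the relational composite with $\lsem R(x_1, x_2) \rsem \equiv \lsem \exists y. \, [ R_a(x_1, y) \And R_b(y, x_2) ] \rsem$. That hands us a very weakly point surjective relation on the object $A$ in our (regular, or merely well-powered with finite limits) category $\CC$.

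Then I would simply feed that $R$ into the Relational Lawvere Fixpoint Lemma, which says precisely that a vwps relation on any object forces every endomorphism $\tau : \Sub \Longrightarrow \Sub$ of the subobject functor to have a fixpoint. Composing the two implications gives the claim: under (A1)--(A3), every endomorphism of the subobject functor has a fixpoint. So the body of the proof is essentially the single sentence ``By the previous lemma $R$ is vwps, so the Relational Lawvere Fixpoint Lemma applies to $R$ on $A$, yielding a fixpoint for every $\tau : \Sub \Longrightarrow \Sub$.''

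I would not expect any genuine obstacle here, since all the work has been offloaded: the construction of $R$, the verification that the BK assumptions make it vwps (that is the content of our restated Basic Lemma), and the Lawvere-style diagonal argument on $R$ (the Relational Lawvere Lemma) are all already established. The only thing worth a remark is that a ``fixpoint'' of $\tau$ should be understood as in the Relational Lawvere Lemma --- a natural transformation $\mathbf{K}\Term \Longrightarrow \Sub$ from the constant functor at the terminal object, equivalently an element $v \in \Sub(\Term)$ with $\tau_{\Term}(v) = v$ --- and concretely it is $v = \lsem q(c) \rsem$ where $q(x) \equiv \exists y. [R_a(x, y) \And R_b(y, x)]$, matching the sentence $S = q(c)$ produced by the syntactic Fixpoint Lemma of Section~3. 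The only care needed is to keep the objective (natural-transformation) and syntactic (formula-context $O[\cdot]$) notions of propositional operator aligned, which the earlier discussion via Yoneda, $\Sub \cong \CC(-, \Omega)$, already handles; no new argument is required.
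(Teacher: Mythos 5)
Your proposal matches the paper exactly: the paper presents this lemma as an immediate corollary obtained by combining the preceding lemma (that (A1)--(A3) make the composite relation $R$ vwps) with the Relational Lawvere Fixpoint Lemma. Your additional remarks identifying the concrete fixpoint with $\lsem q(c)\rsem$ and aligning the semantic and syntactic notions of propositional operator are consistent with the surrounding discussion and require no further argument.
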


\section{Multi-Agent Generalization of BK}
A \emph{multiagent belief structure} in a regular category is
\[ (\{ A_i \}_{i \in I}, \{ R_{ij} \}_{(i, j) \in I \times I} ) \]
where
\[ R_{ij} \rmon A_i \times A_j . \]

A \emph{belief cycle} in such a structure is
\[\begin{diagram}
A & \rrel^{R_1} & A_1 & \rrel^{R_2} & \cdots & \rrel^{R_n} & A_n & \rrel^{R_{n+1}} & A \\
\end{diagram}
\]
where we write $R : B \rrel C$ if $R$ is a relation of the indicated type, \ie a subobject of $B \times C$.

We now formulate  \emph{Generalized BK Assumptions} for such a belief cycle:

For each subobject $p \rmon A$, there is some $c : \Term \rarr A$ such that
\[ \begin{array}{ll}
c \models &  \Box_1 \cdots \Box_n \boxplus_{n+1} p \\
& \wedge \\
& \Diamond_1 \top  \And \Box_1 \Diamond_2 \top \And \cdots \And \Box_1 \cdots \Box_{n-1} \Diamond_n \top
\end{array}
\]
These assumptions can be written straightforwardly as regular sequents.

\paragraph{Multiagent BK Fixpoint Lemma}

We can define the relation $R = R_1 ; \cdots ; R_{n+1} : A \rrel A$.

\begin{lemma}[Generalized Basic Lemma]
Under the Generalized BK assumptions, $R$ is vwps. 
\end{lemma}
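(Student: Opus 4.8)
The plan is to run the argument of the Basic Lemma (the $n=1$ case) along the entire belief cycle. For the constant $c : \Term \rarr A$ furnished by the Generalized BK assumptions and the given subobject $p \rmon A$, it suffices to derive the two closed sequents
\[ p(c) \vdash R(c,c), \qquad R(c,c) \vdash p(c), \]
since together they give $\lsem p(c) \rsem = \lsem R(c,c) \rsem$ in $\Sub(\Term)$, which is precisely vwps for $R$. By soundness of regular logic in a regular category (Section~\ref{catrevsec}) we may reason in the internal logic; equivalently, everything can be phrased directly with pullbacks along the $R_k$. Unfolding relational composition,
\[ R(c,c) \;\equiv\; \exists y_1 \cdots \exists y_n.\bigl[\, R_1(c,y_1) \And R_2(y_1,y_2) \And \cdots \And R_n(y_{n-1},y_n) \And R_{n+1}(y_n,c) \,\bigr], \]
which is built from atomic formulas by $\And$ and $\exists$ only, hence regular; this is the sentence $q(c)$ that will turn out to be the Lawvere fixpoint in the ensuing corollary.

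First I would unfold the Generalized BK assumption into usable pieces. Write $\mathrm{Chain}_k(y_1,\ldots,y_k)$ for $R_1(c,y_1) \And \cdots \And R_k(y_{k-1},y_k)$. The formula $c \models \Box_1 \cdots \Box_n \boxplus_{n+1} p$ says that whenever $\mathrm{Chain}_n(y_1,\ldots,y_n)$ holds, $y_n$ assumes $p$, \ie $\forall x.\,[R_{n+1}(y_n,x) \Leftrightarrow p(x)]$; splitting the biconditional and instantiating $x := c$ yields the two chain-indexed sequents that generalize (A1) and (A2):
\[ \mathrm{Chain}_n(\vec y) \And R_{n+1}(y_n,c) \vdash p(c), \qquad \mathrm{Chain}_n(\vec y) \And p(c) \vdash R_{n+1}(y_n,c). \]
The conjunction $\Diamond_1 \top \And \Box_1 \Diamond_2 \top \And \cdots \And \Box_1 \cdots \Box_{n-1} \Diamond_n \top$ at $c$ gives, by an easy induction on $k = 1,\ldots,n$, that every partial chain $\mathrm{Chain}_{k-1}(y_1,\ldots,y_{k-1})$ can be extended: $\exists y_k.\, R_k(y_{k-1},y_k)$. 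This is the generalization of (A3) --- the ``diamond staircase''.

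The two sequents then follow exactly as in the Basic Lemma. For $p(c) \vdash R(c,c)$: assuming $p(c)$, use the diamond staircase inductively to produce $y_1,\ldots,y_n$ with $\mathrm{Chain}_n(\vec y)$; feeding $\mathrm{Chain}_n(\vec y) \And p(c)$ into the (A2)-generalization gives $R_{n+1}(y_n,c)$, and the conjunction of all of these is a witness for $R(c,c)$. For $R(c,c) \vdash p(c)$: an instance of $R(c,c)$ supplies $y_1,\ldots,y_n$ with $\mathrm{Chain}_n(\vec y) \And R_{n+1}(y_n,c)$, and feeding this into the (A1)-generalization yields $p(c)$. Exactly as in the one-person case, the diamond staircase and the ``$\Leftarrow$'' direction of $\boxplus_{n+1}$ are used only for the first sequent, and the ``$\Rightarrow$'' direction of $\boxplus_{n+1}$ only for the second.

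I do not expect a genuine obstacle here; the only real work is the bookkeeping of the $n$-fold nested quantifiers together with the two small inductions --- one extracting a belief chain from a witness for $R(c,c)$, the other building a belief chain out of the staircase --- and none of it leaves regular logic. The substantive point is conceptual: the $n$ diamond hypotheses are exactly what lets one ``walk'' the weak-point-surjectivity witness $y_n$ into position at the far end of the cycle, \ie propagate weak point surjectivity back along the belief chain, which is the phenomenon this generalization is meant to expose.
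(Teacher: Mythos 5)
Your proof is correct and matches the paper's intended argument: the paper states this lemma without proof, but the evident route is exactly your direct generalization of the Basic Lemma --- unfolding $\Box_1\cdots\Box_n\boxplus_{n+1}p$ into the two chain-indexed sequents generalizing (A1)/(A2), using the nested diamonds to extend partial chains for the direction $p(c) \vdash R(c,c)$, and the forward direction of the assumes-modality for $R(c,c) \vdash p(c)$, all within regular logic. Your closing observation about which hypotheses are used for which inclusion also agrees with the structure of the paper's two-person proof.
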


Hence the Relational Fixpoint Lemma applies.
Note that in the one-person case $n = 0$, \emph{assumption completeness coincides with weak point surjectivity}.

In modal terms:
\[ c \models \boxplus p \; \equiv \; \forall x. \, R(c, x) \Leftrightarrow p(x) . \]
One-person BK   \emph{is}  (relational) Lawvere!
The force of the BK argument is that the (very) wps property propagates back along \emph{belief chains}.

In particular, this produces the `\textbf{believes-assumes}' construction of BK, or the generalized version
$\mbox{\textbf{believes}}^*$-\textbf{assumes}, 
in which `believes' is iterated $n$ times followed by an `assumes'.

\section{Compositional Analysis}

We shall briefly consider the issue of \emph{compositional gluing} of belief relations with given completeness properties.
For simplicity, we shall conduct our discussion concretely, in terms of relations on sets.
To incorporate the idea of relativization to a set of predicates, we shall assume that each set $A$ is given together with a set $\PP(A) \subseteq \pow(A) \setminus \{ \varnothing \}$ of (non-empty) predicates on $A$.

Suppose we are given a relation $R : A \rrel B$.  We say that $R$ is \emph{assumption-complete} (with respect to $\PP$) if for every $p \in \PP(B)$, for some $x \in A$, for all $y \in B$:
\[ R(x, y)   \; \Leftrightarrow \; p(y) .  \]
This is just wps again, of course.

We say that it is \emph{belief-complete} if for all $y \in B$:
\[ R(x, y)   \; \Rightarrow \; p(y) .  \]
and also $\exists y. R(x,y)$.
Modally, this corresponds to
\[ x \models \Box p \And \Diamond \top . \]
Now suppose we have relations 
\[ R_{ab} : A \rrel B, \qquad R_{bc} : B \rrel C .  \]
We define
\[ \boxplus_{bc} p = \{ y \in B \mid R_{bc}(y) = p \} . \]

\begin{lemma}[Composition Lemma]
\label{complemm}
Suppose that:

\begin{enumerate}
\item $R_{ab}$ is belief-complete with respect to $\PP(B)$.
\item $R_{bc}$ is assumption-complete with respect to $\PP(C)$.
\item For each $p \in \PP(C)$, $\boxplus_{bc} p \in \PP(B)$.
\end{enumerate}
Then the composition
$R_{ac} = R_{ab} ; R_{bc} : A \rrel C$ is assumption-complete with respect to $\PP(C)$.
\end{lemma}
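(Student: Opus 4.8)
The plan is to unwind the definitions and show directly that $R_{ac} = R_{ab};R_{bc}$ witnesses assumption-completeness. So fix $p \in \PP(C)$. First I would apply hypothesis 3 to get $\boxplus_{bc}\,p \in \PP(B)$; this is the predicate on $B$ consisting of exactly those $y \in B$ with $R_{bc}(y) = p$. Since $R_{ab}$ is belief-complete with respect to $\PP(B)$, there is some $x \in A$ with $R_{ab}(x) \subseteq \boxplus_{bc}\,p$ and $R_{ab}(x) \neq \varnothing$. I claim this $x$ assumes $p$ via $R_{ac}$, i.e. $R_{ac}(x) = p$, which is exactly what assumption-completeness of $R_{ac}$ with respect to $\PP(C)$ requires.

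For the $\subseteq$ inclusion $R_{ac}(x) \subseteq p$: if $z \in R_{ac}(x)$, then by definition of relational composition there is $y \in B$ with $R_{ab}(x,y)$ and $R_{bc}(y,z)$. From $R_{ab}(x,y)$ and $R_{ab}(x) \subseteq \boxplus_{bc}\,p$ we get $y \in \boxplus_{bc}\,p$, i.e. $R_{bc}(y) = p$; then $R_{bc}(y,z)$ gives $z \in p$. For the reverse inclusion $p \subseteq R_{ac}(x)$: this is where hypotheses 1 and 2 must interact. Pick any $y_0 \in R_{ab}(x)$ (possible since $R_{ab}(x) \neq \varnothing$); then $y_0 \in \boxplus_{bc}\,p$, so $R_{bc}(y_0) = p$. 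Hence for any $z \in p$ we have $R_{bc}(y_0,z)$, and combined with $R_{ab}(x,y_0)$ this gives $z \in R_{ac}(x)$. Note the assumption-completeness of $R_{bc}$ (hypothesis 2) is what is really being used here only through the fact that $\boxplus_{bc}\,p$ is nonempty — but actually the nonemptiness of $\boxplus_{bc}\,p$ follows from hypothesis 2 directly (assumption-completeness of $R_{bc}$ gives some $y$ with $R_{bc}(y) = p$), and then hypothesis 3 places it in $\PP(B)$ so that belief-completeness of $R_{ab}$ can be applied to it. So the logical dependency is: hypothesis 2 produces a witness $y \in \boxplus_{bc}\,p$; hypothesis 3 certifies $\boxplus_{bc}\,p \in \PP(B)$; hypothesis 1 then produces $x$.

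The main subtlety — not really an obstacle, but the point where one must be careful — is the bookkeeping around nonemptiness and the direction of the inclusions. Belief-completeness only gives an inclusion $R_{ab}(x) \subseteq \boxplus_{bc}\,p$, not equality, so the $\subseteq$ half of $R_{ac}(x) = p$ is immediate but the $\supseteq$ half genuinely needs the $\Diamond\top$ clause (the nonemptiness of $R_{ab}(x)$) together with the fact that every element of $R_{ab}(x)$ assumes (not merely believes) $p$ through $R_{bc}$. Once any single $y_0 \in R_{ab}(x)$ is fixed, $R_{bc}(y_0) = p$ exactly (since $y_0 \in \boxplus_{bc}\,p$), which pushes all of $p$ into $R_{ac}(x)$. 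I would also remark in passing that this is the relational shadow of the modal identity $\Box_1 \boxplus_2 p \wedge \Diamond_1\top \vdash \boxplus_{1;2}\,p$, which is exactly the content of the Basic Lemma specialized to composition, and which explains structurally why the Generalized Basic Lemma of Section~6 propagates weak point surjectivity back along belief chains.
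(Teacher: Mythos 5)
Your proof is correct and is the evident argument the authors intend: the paper actually states the Composition Lemma without proof (remarking only on the need for the comprehension hypothesis (3)), and your unwinding --- apply (3) to get $\boxplus_{bc}\,p \in \PP(B)$, apply belief-completeness of $R_{ab}$ to that predicate, use the inclusion $R_{ab}(x) \subseteq \boxplus_{bc}\,p$ for $R_{ac}(x) \subseteq p$ and the nonemptiness of $R_{ab}(x)$ for $p \subseteq R_{ac}(x)$ --- is exactly what is needed. Your side remark is also accurate: in this formalization hypothesis (2) enters only through the nonemptiness of $\boxplus_{bc}\,p$, which is already forced by (3) together with the standing convention that $\PP(B) \subseteq \pow(B) \setminus \{ \varnothing \}$.
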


\noindent Note the need for the \emph{comprehension assumption} (3).

We now prove a kind of converse to the Composition Lemma, which \emph{characterises} belief-completeness, and shows \emph{why} the BK assumptions and the \textbf{believes-assumes} pattern arise in this context.

\begin{theorem}[Compositional Characterization]
A relation $R : A \rrel B$ is belief complete with respect to $\PP(B)$ if and only if, for every $S : B \rrel C$ such that
\begin{enumerate}
\item $S$ is assumption complete with respect to $\PP(C)$
\item  $\boxplus_S p \in \PP(B)$ for every $p \in \PP(C)$
\end{enumerate}
the composition $R ; S : A \rrel C$ is assumption complete with respect to $\PP(C)$.
\end{theorem}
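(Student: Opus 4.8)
The plan is to prove the two implications separately; the forward direction (``only if'') is immediate, and the converse (``if'') carries all the content. For the forward direction, suppose $R$ is belief-complete with respect to $\PP(B)$, and let $S : B \rrel C$ be any relation satisfying conditions (1) and (2) of the theorem. Then, with $R_{ab} := R$ and $R_{bc} := S$, these are exactly the three hypotheses of the Composition Lemma (Lemma~\ref{complemm}): $R_{ab}$ is belief-complete with respect to $\PP(B)$, $R_{bc}$ is assumption-complete with respect to $\PP(C)$, and $\boxplus_{bc} p \in \PP(B)$ for every $p \in \PP(C)$. Hence that lemma gives that $R;S = R_{ac}$ is assumption-complete with respect to $\PP(C)$, and there is nothing more to do.

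For the converse, assume that $R;S$ is assumption-complete with respect to $\PP(C)$ for \emph{every} $S$ satisfying (1) and (2); I must produce, for an arbitrary $p \in \PP(B)$, an element $x \in A$ with $R(x) \subseteq p$ and $R(x) \neq \varnothing$, \ie $x \models \Box p \And \Diamond \top$. The idea is to feed the hypothesis a single tailored ``test gadget''. Take $C := \Two = \{0,1\}$ equipped with the minimal predicate family $\PP(C) := \{\{1\}\}$, and define $S : B \rrel C$ by $S(y,1) \Leftrightarrow y \in p$ and $S(y,0) \Leftrightarrow y \notin p$. Then $S(y) = \{1\}$ when $y \in p$ and $S(y) = \{0\}$ otherwise, so $\boxplus_S \{1\} = p$. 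This at once gives condition (2), since $\boxplus_S \{1\} = p \in \PP(B)$, and also condition (1): as $p \neq \varnothing$ (because $\PP(B) \subseteq \pow(B) \setminus \{\varnothing\}$), some $y$ has $S(y) = \{1\}$, so $S$ is assumption-complete with respect to $\PP(C)$.

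Applying the hypothesis to this $S$ yields $x \in A$ with $(R;S)(x) = \{1\}$, \ie $(R;S)(x,1)$ holds while $(R;S)(x,0)$ fails. Unwinding the relational composition, $(R;S)(x,1) \equiv \exists y.\,[R(x,y) \wedge y \in p]$ gives $R(x) \neq \varnothing$ (indeed $R(x) \cap p \neq \varnothing$), so $x \models \Diamond \top$; and the failure of $(R;S)(x,0) \equiv \exists y.\,[R(x,y) \wedge y \notin p]$ gives $\forall y.\,[R(x,y) \Rightarrow y \in p]$, that is $R(x) \subseteq p$, so $x \models \Box p$. Thus $x \models \Box p \And \Diamond \top$, and since $p \in \PP(B)$ was arbitrary, $R$ is belief-complete. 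I expect the only genuine design choice --- hence the one subtle point --- to be the choice of the test codomain $(C, \PP(C))$: a one-point $C$ is too weak, since it can witness only the $\Diamond$-conjunct and never the $\Box$-conjunct; whereas enlarging $\PP(C)$ beyond $\{\{1\}\}$ imposes spurious demands (e.g.~including $\{0\}$ would force a witness outside $p$, impossible when $p = B$, and would also require $B \setminus p \in \PP(B)$). The two-element $C$ with predicate family $\{\{1\}\}$ is precisely calibrated so that assumption-completeness of $R;S$ at $\{1\}$ decomposes into the $\Box$- and $\Diamond$-parts of belief-completeness at $p$; all remaining steps are routine unwinding of $\boxplus_S$ and of relational composition.
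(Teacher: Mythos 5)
Your proof is correct and follows essentially the same route as the paper: the forward direction is exactly an appeal to the Composition Lemma, and the converse uses the identical test gadget $C = \{0,1\}$ with $S$ the characteristic function of $p$ and $\PP(C) = \{\{1\}\}$, so that $\boxplus_S\{1\} = p$ and non-emptiness of $p$ gives assumption-completeness of $S$. The only (inessential) difference is that you argue the converse directly, extracting $x \models \Box p \And \Diamond\top$ from $(R;S)(x) = \{1\}$, whereas the paper phrases it contrapositively by showing no $x$ can assume $\{1\}$ when $R$ fails belief-completeness at $p$.
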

\begin{proof}
The left to right implication is Lemma~\ref{complemm}.

For the converse, we suppose that $R$ is not belief-complete for some $p \in \PP(B)$. We let $C = \{ 0, 1\}$, and define $S$ to be the characteristic function of $p$. We take $\PP(C) = \{ q \}$, where $q = \{ 1 \}$. Note that $\boxplus q = p$, and that $S$ is assumption complete with respect to $\PP(C)$ --- indeed, any element of $p$, which by our general assumption on predicates is non-empty, assumes $q$. 

We claim that $R ; S$ is not assumption complete for $q$. Indeed, for any $x \in A$, if $R(x) = \vn$, then $R ; S(x) = \vn$, and so  $x$ does not assume $q$. The only other possibility, since by assumption $R$ is not belief complete with respect to $p$, is that for some $y \not\in p$, $R(x, y)$. In this case, $R;S(x, 0)$, and so $x$ does not assume $q$.
\end{proof}

\paragraph{Remark}
The proof of the Compositional Characterization Theorem assumes that we have the freedom to choose any collection of predicates we like on a given set. It would be useful to have a more general formulation and result.

\section{Functorial Constructions of Assumption-Complete Models}

We now turn to the question of constructing belief models which are assumption complete with respect to a natural class of predicates. The categorical perspective is well-suited to this task. Indeed, leaving aside model-theoretic subtleties, we can identify the problem as essentially one of finding fixpoints for certain `powerset-like' functors. This `recursion in the large' at the level of types, to support `recursion  in the small' at the level of programs, is a familiar theme in Theoretical Computer Science \cite{AJ}. If we think of recursion as enabling self-reference, in formulas rather than programs, we see the link to the ideas being considered here. Powerful general methods are available for finding such fixpoints, as solutions of domain equations \cite{AJ} or final coalgebras \cite{Rut}.

The problem can be phrased as follows, in the setting of the \emph{strategy-based belief models} of \cite{BK}.
We are given strategy sets $S_{a}$, $S_{b}$ for Alice and Bob respectively. We want to find sets of types $T_{a}$ and $T_{b}$ such that
\begin{equation}
\label{domeq}
T_{a} \cong \PP(U_{b}), \qquad T_{b} \cong \PP(U_{a}) 
\end{equation}
where $U_{a} = S_{a} \times T_{a}$ and $U_{b} = S_{b} \times T_{b}$ are the sets of states for Alice and Bob. Naively, $\PP$ is powerset, but in fact it must be a restricted set of subsets (extensions of predicates) defined in some more subtle way, or such a structure would be impossible by mere cardinality considerations. 

Thus a state for Alice is a pair $(s, t)$ where $s$ is a strategy from her strategy-set and $t$ is a type.
Given an isomorphism $\alpha : T_{a} \riso \PP(U_{b})$, we can define a relation $R_{a} : U_{a} \rrel U_{b}$ by:
\[ R_{a}((s, t), (s', t')) \; \equiv \; (s',t') \in \alpha(t) . \]
Note that $(s, t)$ \textbf{assumes} $\alpha(t)$.
Because $\alpha$ is an isomorphism, the belief model $(U_{a}, U_{b}, R_{a}, R_{b})$ is automatically assumption complete with respect to $\PP(U_{a})$ and $\PP(U_{b})$.

In fact, having isomorphisms $\alpha : T_{a} \riso \PP(U_{b})$, $\beta : T_{b} \riso \PP(U_{a})$ is more than is strictly required for assumption completeness. It would be sufficient to have retractions
\[ T_{a} \rhd \PP(U_{b}), \qquad T_{b} \rhd \PP(U_{a}) \]
\ie maps 
\[ r_{a} : T_{a} \rarr \PP(U_{b}), \qquad s_{a}  : \PP(U_{b}) \rarr T_{a} \]
such that $r_{a} \circ s_{a} = \id_{\PP(U_{b})}$, and similarly for $T_{b}$ and $\PP(U_{a})$.\footnote{Brandenburger and Keisler ask only for surjections, but they are working in a setting where surjections can always be split.} However, we shall emphasize the situation where we do have isomorphisms, where we can really speak of \emph{canonical solutions}.

We shall now generalize this situation so as to clarify what the mathematical form of the problem is.
Suppose that we have a category $\CC$, which we assume to have finite products, and a functor $\PP : \CC \rarr \CC$. We are given objects $S_{a}$ and $S_{b}$ in $\CC$. Hence we can define functors $F_{a}, F_{b} : \CC \rarr \CC$:
\[ F_{a}(Y) = \PP(S_{b} \times Y), \qquad F_{b}(X) =  \PP(S_{a} \times X) . \]
Intuitively, $F_{a}$ provides one level of beliefs which Alice may hold about states which combine strategies for Bob with `types' from the `parameter space' $Y$; and symmetrically for $F_{b}$.

Now we define a functor $F : \CC \times \CC \rarr  \CC \times \CC$ on the product category:
\[ F(X, Y) = (F_{a}(Y), F_{b}(X)) . \]
To ask for a pair of isomorphisms as in (\ref{domeq}) is to ask for a \emph{fixpoint} of the functor $F$: an object of $\CC \times \CC$ (hence a pair of objects of $\CC$, $(T_{a}, T_{b})$) such that
\[ (T_{a}, T_{b}) \cong F(T_{a}, T_{b}) . \]
This situation has been extensively studied in Category Theory and Theoretical Computer Science \cite{Rut,Barr,AJ}. In particular, the notion of \emph{final coalgebra} provides a canonical form of solution. Once again, previous work has focussed on `one-person' situations, although the tools needed for two- or multi-agent forms of solution --- essentially the ability to solve simultaneous equations --- are already in hand.
We shall briefly review some standard notions on coalgebra before applying them to the construction of assumption-complete models.

\subsection{Brief Review of Coalgebra}
Let $F : \CC \rarr \CC$ be a functor.
An \emph{$F$-coalgebra} is a pair $(A, \alpha)$ where $A$ is an object of $\CC$, and $\alpha$ is an  arrow $\alpha : A \rarr FA$. We say that $A$ is the \emph{carrier} of the coalgebra, while $\alpha$ is the \emph{behaviour map}.

An \emph{$F$-coalgebra homomorphism} from $(A, \alpha)$ to $(B, \beta)$ is an arrow $h : A \rarr B$ such that
\begin{diagram}
A & \rTo^{\alpha} & FA \\
\dTo^{h} & & \dTo_{Fh} \\
B & \rTo_{\beta} & FB
\end{diagram}
$F$-coalgebras and their homomorphisms form a category $\FCoalg$.

An $F$-coalgebra $(C, \gamma)$ is \emph{final} if for every $F$-coalgebra $(A, \alpha)$ there is a unique homomorphism from $(A, \alpha)$ to $(C, \gamma)$, \ie if it is the terminal object in $\FCoalg$.

\begin{proposition}
If a final $F$-coalgebra exists, it is unique up to isomorphism.
\end{proposition}

\begin{proposition}[Lambek Lemma]
If $\gamma : C \rarr FC$ is final, it is an isomorphism
\end{proposition}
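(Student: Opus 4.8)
The plan is to prove the Lambek Lemma by exhibiting a two-sided inverse for the final coalgebra map $\gamma : C \rarr FC$, using only the universal property of finality. The key observation is that if $(C,\gamma)$ is a final $F$-coalgebra, then so is $(FC, F\gamma)$: the object $FC$ carries the coalgebra structure $F\gamma : FC \rarr F(FC)$, and any $F$-coalgebra maps uniquely into it by applying $F$ to the unique map into $(C,\gamma)$ and composing suitably. Concretely, first I would note that $\gamma : C \rarr FC$ is itself an $F$-coalgebra homomorphism from $(C,\gamma)$ to $(FC, F\gamma)$, since the required square $F\gamma \circ \gamma = F\gamma \circ \gamma$ commutes trivially.

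Next I would invoke finality of $(C,\gamma)$ to obtain a unique homomorphism $h : (FC, F\gamma) \rarr (C,\gamma)$; by definition this means $\gamma \circ h = Fh \circ F\gamma$. I then claim that $h$ is the desired inverse to $\gamma$. For one composite: $h \circ \gamma : (C,\gamma) \rarr (C,\gamma)$ is a composite of two $F$-coalgebra homomorphisms (namely $\gamma$ followed by $h$), hence is itself a homomorphism from the final coalgebra to itself; but so is $\id_C$, and by the uniqueness clause of finality there is exactly one such homomorphism, so $h \circ \gamma = \id_C$. For the other composite: using $\gamma \circ h = Fh \circ F\gamma = F(h \circ \gamma)$ by functoriality, and substituting $h \circ \gamma = \id_C$, we get $\gamma \circ h = F(\id_C) = \id_{FC}$. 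Hence $\gamma$ is an isomorphism with inverse $h$.

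I do not expect any genuine obstacle here — the proof is a completely standard diagram chase, and the only subtlety worth stating carefully is that the uniqueness part of the finality condition (not merely existence) is what forces $h \circ \gamma = \id_C$. It may also be worth remarking that Proposition on uniqueness of final coalgebras up to isomorphism, stated just before, is a special case of the same principle (any two final objects are uniquely isomorphic), so the two propositions share the same underlying argument; I would phrase the Lambek proof so as to make that parallel visible. If one wanted to be maximally economical one could even derive Lambek directly from the preceding proposition by observing that $(FC, F\gamma)$ is final and invoking uniqueness-up-to-iso, but I prefer the explicit construction of the inverse since it is more informative and self-contained.
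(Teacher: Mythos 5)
Your proof is correct and is the standard Lambek argument: form the coalgebra $(FC, F\gamma)$, use finality to get $h : FC \rarr C$, conclude $h \circ \gamma = \id_C$ by the uniqueness clause applied to endomorphisms of the final coalgebra, and then $\gamma \circ h = F(h\circ\gamma) = \id_{FC}$ by functoriality. The paper states this proposition without proof, as a known result, so there is nothing to compare against; the only caveat is that your opening claim that $(FC,F\gamma)$ is itself final is neither proved nor needed for the argument you actually carry out, so it is best presented (as you do) as motivation rather than as a step of the proof.
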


Final coalgebras subsume what are known as \emph{terminal models} in the literature on type spaces. A standard way of constructing final coalgebras is by a `terminal sequence' \cite{Barr,Worrell}
\[ \Term \leftarrow F(\Term) \leftarrow F^{2}(\Term) \leftarrow \cdots F^{k}(\Term) \leftarrow \cdots \]
This sequence continues at a limit ordinal $\lambda$ by taking the limit of the diagram constructed at the previous stages, and at a successor ordinal $\lambda + 1$ by applying $F$ to all arrows $A_{\mu} \leftarrow A_{\nu}$ constructed at previous stages, with $\mu \leq \nu$. If at some  stage $\lambda$ the arrow $A_{\lambda} \leftarrow A_{\lambda + 1 } = F(A_{\lambda})$ is an isomorphism, we have constructed the final coalgebra.\footnote{This method will work for any monic-preserving accessible endofunctor on a locally  presentable category \cite{Worrell}.}
This form of construction is used in the literature on \emph{Harsanyi type spaces} \cite{Harsanyi} to construct what are known as \emph{universal models}. Aviad Heifetz and Dov Samet gave the first construction of a universal type space  in the category of measurable spaces \cite{HS}, following other work in more restricted contexts. Subsequently,  Larry Moss and Ignacio Viglizzo made explicit use of final coalgebra ideas to clarify and generalize this construction  \cite{MV}. Their focus was on the category of measurable spaces. Our contribution here is to set the discussion in a wider context, emphasizing the construction of interactive belief models which are assumption complete.

Thus these well-developed methods from Theoretical Computer Science can be used to address the following question raised by Brandenburger and Keisler:
\begin{quotation}
We end by noting that, to the best of our knowledge, no general treatment exists of the relationship between universal, complete, and terminal
models (absent specific structure). Such a treatment would be very useful.
\end{quotation}

\noindent The topic deserves a fuller treatment than is possible here. We shall content ourselves with giving some examples where known results on the existence of final coalgebras can be applied to yield assumption complete models.

\subsection{Application to Assumption Complete Models}
We begin by noting that standard results allow us to lift one-person to two- (or multi-)agent constructions.
Suppose we have endofunctors $G_{1}, G_{2} : \CC \rarr \CC$. We can define a functor
\[ G : \CC \times \CC \rarr \CC \times \CC \; :: \; G(X, Y) = (G_{1}(Y), G_{2}(X)) . \]
Note that this directly generalizes our definition of $F$ from $F_{a}$ and $F_{b}$. 
We have $G = (G_{1} \times G_{2}) \circ \mathsf{twist}$. It is standard that if $G_{1}$ and $G_{2}$ satisfy continuity or accessibility hypotheses which guarantee that they have final coalgebras, so will $G$. 

Note that the final sequence for $G$ will have the form
\[ (\Term, \Term) \leftarrow (G_{1}(\Term), G_{2}(\Term)) \leftarrow (G_{1}(G_{2}(\Term)), G_{2}(G_{1}(\Term)) \leftarrow  \]
\[ \cdots \leftarrow ((G_{1}\circ G_{2})^{k}(\Term), (G_{2}\circ G_{1})^{k}(\Term)) \leftarrow \cdots \]
This `symmetric feedback' is directly analogous to constructions which arise in Geometry of Interaction and the Int construction \cite{NFGOI,Retracing,CG}. It is suggestive of a compositional structure for interactive belief models.

We shall now consider three specific settings where the general machinery we have described can be applied to construct assumption complete models as final coalgebras. In each case we must specify the ambient category $\CC$, and the functor $\PP$.

\subsubsection{Sets}
We firstly consider $\Set$, the category of sets and functions. Our candidate for $\PP$ is a variant of the powerset functor. We take $\PP(X) = \pkap(X)$, the collection of all subsets of $X$ of cardinality less than $\kappa$, where $\kappa$ is an inaccessible cardinal.\footnote{Alternatively, and essentially equivalently, we can follow Peter Aczel \cite{AczelM89}, and work over the (`superlarge') category of classes, taking $\PP(A)$ to be the class of sub-\emph{sets} of a class $A$.}
It is standard that, for any sets $S_a$, $S_b$,  the functors $F_a$ and $F_b$ are accessible, and hence so is the functor $F = (F_a \times F_b) \circ \mathsf{twist} : \CC \times \CC \rarr \CC \times \CC$. Hence we get a final coalgebra
\[ \gamma : (T_a, T_b) \riso (\pkap(S_a \times T_b), \pkap(S_b \times T_a)) . \]
This yields an assumption complete belief model, as previously discussed.

Note that the terminal sequence for this functor is always transfinite, as analyzed in detail in \cite{Worrell}. Even in the case $\kappa = \omega$ (finite subsets), $\omega + \omega$ stages are required for convergence to the final coalgebra.

\subsubsection{Stone Spaces}

Another convenient setting for final coalgebra is the category of \emph{Stone spaces}, \ie totally disconnected compact Haussdorff spaces \cite{cookstour,KupkeKV04}. By Stone duality, this category is dual to the category of Boolean algebras. Our candidate for $\PP$ here is the \emph{Vietoris powerspace construction} \cite{Michael}.
In \cite{cookstour}, one can find essentially a treatment of the one-person case of the situation being considered here. The final coalgebra constructed here is closely related to the model built in a more concrete fashion in \cite{BK}. We get stronger properties (isomorphism rather than surjection) and a clearer relation to general theory.

In this case, the final coalgebra is reached after $\omega$ stages of the terminal sequence, because of continuity properties of the functor.

\subsubsection{Algebraic Lattices}
As a final example, we venture into the realm of Domain theory \cite{CLD,AJ}. We work in the category of  algebraic lattices and Scott-continuous maps (those preserving directed joins). We have two convenient choices for $\PP$: the \emph{lower} and \emph{upper powerdomain} constructions, both well-studied in Domain theory. In the first case, we take the lattice of Scott-closed subsets of an algebraic lattice, ordered by inclusion. In the second, we take the subsets which are compact in the Scott topology, and upwards closed in the partial ordering, ordered by reverse inclusion. In either case, we obtain a continuous functor, which converges to the final coalgebra in $\omega$ stages of the terminal sequence.

\subsubsection{Closure under logical constructions}
We have constructed models which are assumption complete in a semantic sense, with respect to the predicates specified by the functor $\PP$. A further issue is how expressive these collections of predicates are; this can be made precise in terms of which logical constructions they are closed under, and hence which logics can be interpreted. Brandenburger and Keisler show that their topological belief model is closed under conjunction, disjunction, existential and universal quantification, and constructions corresponding to the \textbf{assumes} and \textbf{believes} modalities. The same  arguments show that our model in Stone spaces is closed under these constructions. Similar arguments show that the model in $\Set$ is also closed under these constructions. In this case, closure under the \textbf{believes} modality requires that if a set $S$ has cardinality less than $\kappa$, so does its powerset. This follows from the inaccessibility of $\kappa$. Finally, the models in algebraic lattices are also closed under these constructions, with the proviso that appropriate order-theoretic saturation (upwards or downwards closure) must be applied in some cases.

These models also allow for various forms of recursive definition. We leave a detailed account to an extended version of this paper.

\section{Further Directions}

There are a number of natural directions to be pursued. One is to a more comprehensive account of the construction of belief models and type spaces, taking full advantage of the use of categorical methods, and of developments in coalgebraic logic. Another is to a finer analysis of the use of completeness hypotheses in justifying solution concepts for games. Finally, we would like to pursue the broader agenda of understanding the mathematical structure of interaction, and the scope of interactive versions of logical and mathematical phenomena which have previously only been studied in `one-person' versions.

\section{Acknowledgements}
This research was supported by the EPSRC grant  EP/F067607/1 and by ONR.

\bibliographystyle{plain}

\bibliography{biblio}

\end{document}